\title{\textbf{Multiple Petersen subdivisions\\in permutation
    graphs}\footnote{Supported by project MEB 021115 of the Czech
    Ministry of Education and PHC Barrande 24444XD of the French MAE.
    This work falls within the scope of the \textsc{LEA STRUCO}.}}%
\author{Tom\'{a}\v{s} Kaiser$^{\:1}$\\
  Jean-S\'{e}bastien Sereni$^{\:2}$\\
  Zelealem Yilma$^{\:3}$} \date{}
\newtheorem{theorem}{Theorem}
\newtheorem{lemma}[theorem]{Lemma}
\newtheorem{proposition}[theorem]{Proposition}
\newtheorem{corollary}[theorem]{Corollary}
\newcommand\size[1] {\left|{#1}\right|}
\newcommand\Set[2] {\left\{{#1}:\,{#2}\right\}}
\newcommand\Setx[1] {\left\{{#1}\right\}}
\newcommand{\isom}{\simeq}
\newcommand{\match}[2]{#1\{#2\}}
\newcommand{\ol}{\overline}
\newcommand{\fig}[1]{\includegraphics[page=#1]{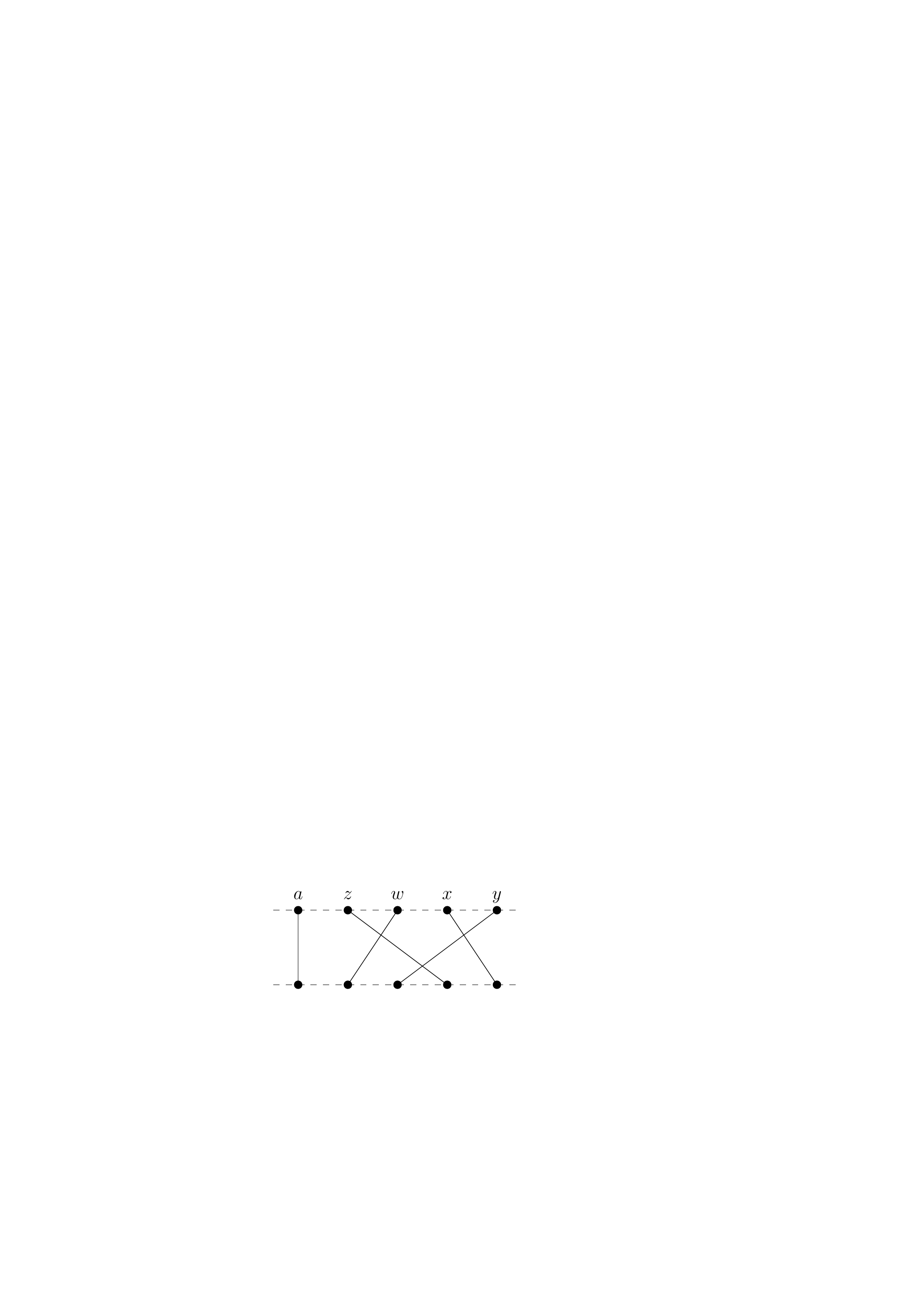}}%
\newcommand{\sfig}[2]{\subfloat[#2]{\fig{#1}}}%
\newcommand{\sfigtop}[2]{\newbox{\pic}\sbox{\pic}{\fig{#1}}%
  \subfloat[#2]{\vbox to\ht\base{\hbox to\wd\pic{\usebox\pic}}}}
\newcommand{\hf}{\hspace*{0pt}\hspace{\fill}\hspace*{0pt}}
\begin{document}
\maketitle
\footnotetext[1]{Department of Mathematics and Institute for
  Theoretical Computer Science, University of West Bohemia,
  Univerzitn\'{\i}~8, 306~14~Plze\v{n}, Czech Republic. E-mail:
  \texttt{kaisert@kma.zcu.cz}. Supported by project P202/12/G061 of
  the Czech Science Foundation.}%
\footnotetext[2]{CNRS (LIAFA, Universit\'{e} Diderot), Paris, France.
  E-mail: \texttt{sereni@kam.mff.cuni.cz}. This author's work was
  partially supported by the French \emph{Agence Nationale de la
    Recherche} under reference \textsc{anr 10 jcjc 0204 01}.}%
\footnotetext[3]{LIAFA, Universit\'{e} Denis Diderot (Paris 7), 175
  Rue du Chevaleret, 75013 Paris, France. E-mail:
  \texttt{Zelealem.Yilma@liafa.jussieu.fr}.  This author's work was
  partially supported by the French \emph{Agence Nationale de la
    Recherche} under reference \textsc{anr 10 jcjc 0204 01}.}

\begin{abstract}
  A permutation graph is a cubic graph admitting a 1-factor $M$ whose
  complement consists of two chordless cycles. Extending results of
  Ellingham and of Goldwasser and Zhang, we prove that if $e$ is an
  edge of $M$ such that every 4-cycle containing an edge of $M$
  contains $e$, then $e$ is contained in a subdivision of the Petersen
  graph of a special type. In particular, if the graph is cyclically
  5-edge-connected, then every edge of $M$ is contained in such a
  subdivision. Our proof is based on a characterization of cographs in
  terms of twin vertices. We infer a linear lower bound on the number
  of Petersen subdivisions in a permutation graph with no 4-cycles,
  and give a construction showing that this lower bound is tight up to
  a constant factor.
\end{abstract}


\section{Introduction}
\label{sec:introduction}

A special case of Tutte's 4-flow conjecture~\cite{Tut:algebraic}
states that every bridgeless cubic graph with no minor isomorphic to
the Petersen graph is 3-edge-colourable. Before this special case was
shown to be true by Robertson et al. (cf.~\cite{Tho:recent}), one of
the classes of cubic graphs for which the conjecture was known to hold
was the class of permutation graphs --- i.e., graphs with a 2-factor
consisting of two chordless cycles. Indeed, by a result of
Ellingham~\cite{Ell:petersen}, every permutation graph is either
Hamiltonian --- and hence 3-edge-colourable --- or contains a
subdivision of the Petersen graph. To state his theorem more
precisely, we introduce some terminology.

Rephrasing the above definition, a cubic graph $G$ is a
\emph{permutation graph} if it contains a perfect matching $M$ such
that $G-E(M)$ is the disjoint union of two cycles, none of which has a
chord in $G$. A perfect matching $M$ with this property is called a
\emph{distinguished matching} in $G$. For brevity, if $G$ is a
permutation graph with a distinguished matching $M$, then the
pair $(G,M)$ is referred to as a \emph{marked permutation graph}.

We let $P_{10}$ be the Petersen graph. Given a distinguished
matching $M$ in $G$, an \emph{$M$-copy of $P_{10}$} is a subgraph $G'$
of $G$ isomorphic to a subdivision of $P_{10}$ and composed of the
two cycles of $G-E(M)$ together with five edges of
$M$. Following Goldwasser and Zhang~\cite{GZ:permutation}, an $M$-copy of $P_{10}$ is also
referred to as an $M$-$P_{10}$. Furthermore, an \emph{$M$-copy of the
  4-cycle $C_4$} (or an \emph{$M$-$C_4$}) is a $4$-cycle in $G$ using
two edges of $M$.

The proof of Ellingham's result implies that if a marked permutation
graph $(G,M)$ contains no $M$-$P_{10}$, then it contains an $M$-$C_4$
(and is therefore Hamiltonian). Goldwasser and
Zhang~\cite{GZ:permutation} obtained a slight strengthening:
\begin{theorem}\label{t:zhang}
  If $(G,M)$ is a marked permutation graph, then $G$ contains either
  two $M$-copies of $C_4$, or an $M$-copy of $P_{10}$.
\end{theorem}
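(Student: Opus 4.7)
I would reformulate the problem combinatorially. Label the two chordless cycles as $C_1 = u_0 u_1 \cdots u_{n-1}$ and $C_2 = v_0 v_1 \cdots v_{n-1}$; since $G$ is cubic and both cycles are chordless, $M$ induces a bijection $\sigma\colon V(C_1)\to V(C_2)$, which we encode by a permutation $\pi$ of $\mathbb{Z}_n$ via $\sigma(u_i) = v_{\pi(i)}$. An $M$-$C_4$ then corresponds to an index $i$ with $\pi(i+1) \equiv \pi(i) \pm 1 \pmod{n}$, while an $M$-$P_{10}$ corresponds to five indices $i_1 < \cdots < i_5$ (cyclically) whose $\pi$-images appear on $C_2$ in pentagram cyclic order $(\pi(i_1), \pi(i_3), \pi(i_5), \pi(i_2), \pi(i_4))$ or its reverse.

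I would argue by induction on $n$, with the small cases $n \le 4$ verified by direct inspection. For the inductive step, assume that $(G,M)$ has no $M$-$P_{10}$; Ellingham's theorem then provides at least one $M$-$C_4$, which we may take to be $D$, on vertices $u_a, u_{a+1}, v_b, v_{b+1}$. The main construction is an \emph{excision} of $D$: form $(G', M')$ by deleting the four vertices of $D$ and reconnecting the shortened cycles with the new edges $u_{a-1} u_{a+2}$ and $v_{b-1} v_{b+2}$, setting $M' := M \setminus \{u_a v_b,\, u_{a+1} v_{b+1}\}$. Because $C_1$ and $C_2$ were chordless in $G$, so are the shortened cycles $C_1'$ and $C_2'$ in $G'$; hence $(G', M')$ is again a marked permutation graph, of strictly smaller size.

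The induction hypothesis applied to $(G', M')$ yields either an $M'$-$P_{10}$ or two $M'$-$C_4$'s. An $M'$-$P_{10}$ lifts straightforwardly to an $M$-$P_{10}$ in $G$: each $C_i$ is the subdivision of $C_i'$ obtained by reinserting the two excised vertices, so the two cycles of $G - E(M)$ together with the five $M'$-edges used by the $M'$-$P_{10}$ form a subdivision of that $M'$-$P_{10}$, hence a subdivision of $P_{10}$ --- contradicting our hypothesis. An $M'$-$C_4$ that avoids both added edges is already an $M$-$C_4$ in $G$ vertex-disjoint from $D$, providing the second $M$-$C_4$. Since each added edge lies in at most one $M'$-$C_4$ --- namely the unique cycle of the form $u_{a-1} u_{a+2} v_{\pi(a+2)} v_{\pi(a-1)}$ on the $C_1'$-side, and analogously on $C_2'$ --- the only case remaining is when both $M'$-$C_4$'s straddle the excision, one using $u_{a-1} u_{a+2}$ and the other $v_{b-1} v_{b+2}$.

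The main obstacle is this final case. Its resolution reduces to tracking the values $\pi(a-1), \pi(a+2)$ and $\pi^{-1}(b-1), \pi^{-1}(b+2)$, and performing a short case analysis on their positions relative to $\{b-1, b, b+1, b+2\}$ on $C_2$ (and symmetrically on $C_1$). In each sub-case, either a second $M$-$C_4$ appears adjacent to $D$ (for instance, $\sigma(u_{a-1}) = v_{b-1}$ immediately yields a $C_4$ on $\{u_{a-1}, u_a, v_b, v_{b-1}\}$ with two $M$-edges, and $\sigma(u_{a+2}) = v_{b+2}$ likewise), or the local configuration of five $M$-edges clustered around $D$, possibly augmented by one further $M$-edge retrieved from farther along $C_1$, is forced into pentagram position and yields the desired $M$-$P_{10}$. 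Controlling this last case and verifying the base cases rigorously is where most of the work lies.
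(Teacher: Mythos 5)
There is a genuine gap. Your inductive excision framework is sound up to and including the observation that each added edge $u_{a-1}u_{a+2}$, $v_{b-1}v_{b+2}$ lies in at most one $M'$-$C_4$, so the only troublesome case is when the two $M'$-$C_4$'s supplied by induction each use one added edge. But that final case is not a residual technicality to be dispatched by ``a short case analysis'' --- it is where the entire content of the theorem is concentrated, and you have not carried it out. Concretely, after discarding the easy sub-cases (if $\sigma(u_{a-1})\in\{v_{b-1}\}$ or $\sigma(u_{a+2})\in\{v_{b+2}\}$ you do get a second $M$-$C_4$ glued to $D$; if one $M'$-$C_4$ uses both added edges the other must avoid them and again you are done), you are left with the configuration where $\sigma(u_{a-1}),\sigma(u_{a+2})$ are consecutive on $C_2$ far from $v_b$, and $\sigma^{-1}(v_{b-1}),\sigma^{-1}(v_{b+2})$ are consecutive on $C_1$ far from $u_a$. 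This configuration is realizable with $D$ as the unique $M$-$C_4$ (e.g.\ on $16$ vertices with $\pi=(0,1,5,3,7,2,6,4)$ in one-line notation, where $D$ sits on indices $0,1$); there one checks that \emph{no} five of the six matching edges ``clustered around $D$'' are in pentagram position, so your claim that the local configuration is ``forced into pentagram position'' is false as stated, and the $M$-$P_{10}$ that does exist must be hunted down using edges from elsewhere on the cycles. Nothing in your outline explains how to do this in general, and you also lean on Ellingham's theorem as a black box to seed the induction.

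For comparison: the paper never proves this statement directly --- it is quoted from Goldwasser and Zhang --- but derives the stronger Theorem~\ref{t:main} by an entirely different mechanism, which is precisely designed to defeat the difficulty you ran into. One fixes the matching edge $aa'$, forms the crossing graph $H_a$ on $A-\Setx{a}$, and shows (Lemma~\ref{l:path}) that an induced $P_4$ in $H_a$ yields an $M$-$P_{10}$ through $aa'$; if no such $P_4$ exists, $H_a$ is a cograph, hence contains twins (Theorem~\ref{t:p4free}), and a twin pair translates into a reducible ``parallel block'' of the permutation (Lemma~\ref{l:twin}) on which one can recurse. The cograph characterization is exactly the global structural tool that replaces the unbounded local case analysis your final step would require; if you want to salvage your approach, you should expect to need something of comparable strength there.
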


Lai and Zhang~\cite{LZ:hamilton} studied permutation graphs satisfying
a certain minimality condition and proved that in a sense, they
contain `many' subdivisions of the Petersen graph.

The main result of this note is the following generalization of
Theorem~\ref{t:zhang}.
\begin{theorem}\label{t:main}
  Let $(G,M)$ be a marked permutation graph on at least six vertices
  and let $e\in E(M)$. If $e$ is contained in every $M$-$C_4$ of $G$, then
  $e$ is contained in an $M$-copy of $P_{10}$.
\end{theorem}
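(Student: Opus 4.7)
The plan is to argue by contradiction. Assume $(G, M)$ is a marked permutation graph on at least six vertices and $e \in E(M)$ satisfies the hypothesis, but $e$ is contained in no $M$-copy of $P_{10}$. Let $C_1, C_2$ be the chordless cycles of $G - E(M)$, write $e = u_1 u_2$ with $u_i \in C_i$, and parametrise the vertices of $C_i$ by $\mathbb{Z}/n\mathbb{Z}$ starting from $u_i$. Let $\pi$ be the resulting permutation with $\pi(0) = 0$. An $M$-$C_4$ corresponds to a pair of cyclically consecutive indices $i, i+1$ on $C_1$ such that $\pi(i), \pi(i+1)$ are consecutive on $C_2$; by hypothesis, every such pair involves the index $0$. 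Similarly, $e$ extends to an $M$-$P_{10}$ exactly when there exist four additional indices whose cyclic $C_1$- and $C_2$-orderings realise, together with $0$, the matching pattern of the generalised Petersen graph $GP(5,2)$.

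Next, I would introduce an auxiliary graph $H$ on the vertex set $\{1, \ldots, n-1\}$, identified with the matching edges other than $e$, whose adjacency is a suitable compatibility relation between two such edges with respect to their potential to extend $e$ to an $M$-$P_{10}$. The structural claim to prove is that if no $M$-$P_{10}$ contains $e$, then $H$ is $P_4$-free, hence a cograph: any induced $P_4$ in $H$ should correspond to a 4-tuple of matching edges that, together with $e$, realises the Petersen pattern. I would then invoke the classical twin characterisation of cographs: every cograph on at least two vertices contains a pair of vertices with identical neighbourhoods in the rest of the graph. Such a pair of twin indices translates into two matching edges that are interchangeable with respect to all other matching edges and to the cyclic structure of $C_1$ and $C_2$.

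I would exploit this twin pair either by directly constructing an $M$-$P_{10}$ through $e$ using the twin edges together with three other suitably chosen ones (using Theorem~\ref{t:zhang} to locate useful further structure among the remaining matching edges), or by contracting the twin edges to obtain a strictly smaller marked permutation graph $(G', M')$ still satisfying the hypothesis with respect to the image of $e$; in the latter case the theorem follows by induction on $|V(G)|$, with Theorem~\ref{t:zhang} providing the base case once the reduced graph is small enough that no $M'$-$C_4$ avoids the image of $e$.

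The main obstacle, as I see it, is the choice of adjacency in $H$: it must be delicate enough that $P_4$-freeness of $H$ really captures the non-existence of an $M$-$P_{10}$ through $e$, yet robust enough that a pair of twins yields either an explicit Petersen subdivision or a valid reduction. In particular, ensuring that the reduction preserves cubicity, the chordlessness of the two cycles of $G' - E(M')$, and the hypothesis that every $M'$-$C_4$ contains the image of $e$ looks like the most technical step, since deleting or contracting matching edges can create new short cycles or destroy the distinguished-matching structure.
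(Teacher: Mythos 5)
Your overall skeleton coincides with the paper's: an auxiliary graph on the matching edges other than $e$, the claim that absence of an $M$-$P_{10}$ through $e$ forces this graph to be $P_4$-free, the twin characterisation of cographs, and an induction via a reduction extracted from a twin pair. But the proposal leaves unproved exactly the steps that carry the mathematical content, and you flag them yourself as open ``obstacles'' rather than resolving them. Concretely: (1) you never define the adjacency relation in $H$. The paper takes it to be the crossing relation of the chords $xx'$ and $yy'$ in the standard drawing anchored at $e=aa'$, and the entire argument hinges on this specific choice. (2) The implication ``induced $P_4$ in $H$ implies an $M$-$P_{10}$ through $e$'' is not automatic; the paper's Lemma~\ref{l:path} proves it by a case analysis locating the four chords relative to $a$, and one direction of your claimed equivalence (that $P_4$-freeness ``captures'' non-existence of an $M$-$P_{10}$) is in fact false as stated -- not every $M$-$P_{10}$ through $e$ arises from an induced $P_4$, and the proof only needs the one implication. (3) In the twin case, the paper's Lemma~\ref{l:twin} shows that twins force $M$ to match a contiguous arc $xCy$ of one cycle onto a contiguous arc of the other; this block structure is what makes a clean reduction possible. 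Your alternative of ``contracting the twin edges'' does not obviously yield a cubic graph with two chordless cycles, nor does it obviously preserve the hypothesis on $M$-$C_4$'s.

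There is also a structural point your induction misses: the paper first disposes of the case where some $M$-$C_4$ exists (necessarily through $e$) by deleting that matching edge and suppressing the degree-2 vertices; only after reducing to the $C_4$-free case does the twin reduction provably produce a \emph{strictly smaller} graph (the arc complementary to $xCy$ must contain an internal vertex besides $a$, precisely because $yaa'y'$ is not an $M$-$C_4$). Without this step your induction need not make progress. Finally, Theorem~\ref{t:zhang} plays no role in the paper's proof and cannot serve as a base case here, since it does not locate a $P_{10}$ through a \emph{prescribed} edge; the actual base case is the triangular prism, where the hypothesis on $e$ fails vacuously. As written, the proposal is a correct plan but not a proof.
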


Theorem~\ref{t:main} is established in Section~\ref{sec:proof}. The
proof is based on a relation between $M$-copies of $P_{10}$ in
permutation graphs and induced paths in a related class of graphs.

Of particular interest is the corollary for cyclically
5-edge-connected graphs, that is, graphs containing no edge-cut of
size at most $4$ whose removal leaves at least two non-tree
components.
\begin{corollary}\label{cor:5-conn}
  Every edge of a cyclically 5-edge-connected marked permutation graph
  $(G,M)$ is contained in an $M$-$P_{10}$.
\end{corollary}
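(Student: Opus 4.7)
The plan is to reduce the corollary to Theorem~\ref{t:main} by proving that a cyclically 5-edge-connected marked permutation graph $(G,M)$ contains no $M$-$C_4$ whatsoever. Once this is established, every edge of $M$ automatically satisfies the hypothesis of Theorem~\ref{t:main} vacuously and therefore lies in some $M$-$P_{10}$. At the same time, each edge outside $M$ lies on one of the two chordless cycles of $G - E(M)$, and those two cycles are contained in every $M$-$P_{10}$ by definition; hence such an edge appears in \emph{any} $M$-$P_{10}$ we produce. Together, these two observations cover all edges of $G$.

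To rule out $M$-$C_4$s, I would suppose that $S = \{v_1, v_2, v_3, v_4\}$ is the vertex set of one. In a cubic graph each $v_i$ has exactly one neighbour outside $S$, so the four edges between $S$ and its complement form an edge cut of size~$4$. The subgraph $G[S]$ is itself a $4$-cycle (hence non-tree). A straightforward count gives $|V(H)| = n - 4$ and $|E(H)| = 3n/2 - 8$ for $H := G - S$, so $|E(H)| \geq |V(H)|$ as soon as $n \geq 8$; this forces $H$ to contain a cycle, making the cut a cyclic $4$-edge-cut and contradicting the assumption on $(G,M)$.

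To close, I would verify that the hypothesis of the corollary already rules out $n < 8$. Since $M$ matches $V(C_1)$ to $V(C_2)$, both chordless cycles have $n/2 \geq 3$ vertices, giving $n \geq 6$; and when $n = 6$, both cycles are triangles, forcing $(G,M)$ to be the 3-prism, in which $M$ itself is a cyclic $3$-edge-cut so cyclic 5-edge-connectivity fails. Hence $n \geq 8$, the condition $n \geq 6$ required by Theorem~\ref{t:main} is automatic, and the corollary follows. The only step requiring any care is the edge-count estimate that makes $H$ contain a cycle; I do not anticipate any serious obstacle beyond this.
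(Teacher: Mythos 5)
Your proof is correct and takes essentially the same route as the paper, which states the corollary as an immediate consequence of Theorem~\ref{t:main} without written details: cyclic 5-edge-connectivity excludes every $M$-$C_4$ (so the theorem's hypothesis holds vacuously for each matching edge), and non-matching edges lie on the two cycles contained in every $M$-$P_{10}$. Your edge count showing that $G-S$ contains a cycle for $n\geq 8$, and the exclusion of the $n=6$ prism, correctly supply the details the paper leaves implicit.
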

The class of cyclically 5-edge-connected permutation graphs is richer
than one might expect. Indeed, it had been
conjectured~\cite{Zha:integer} that every cyclically 5-edge-connected
permutation graph is 3-edge-colourable, but this conjecture has been
recently disproved~\cite[Observation~4.2]{BGHM:generation}.

Theorem~\ref{t:main} readily implies a lower bound on the number of
$M$-copies of the Petersen graph in a marked permutation graph $(G,M)$
such that $G$ contains no $M$-$C_4$ and has $n$ vertices. We improve
this lower bound in Section~\ref{sec:counting}. We also show
that the bounds (which are linear in $n$) are optimal up to a constant
factor.

We close this section with some terminology. If $G$ is a graph
and $X\subseteq V(G)$, then $G[X]$ is the induced subgraph of $G$ on
$X$. The set of all neighbours of a vertex $v$ of $G$ is denoted by
$N_G(v)$.


\section{Proof of Theorem~\ref{t:main}}
\label{sec:proof}
Let $(G,M)$ be a marked permutation graph. If $v\in V(G)$, then we
write $v'$ for the neighbour of $v$ in $M$ (which we call the
\emph{friend} of $v$). We extend this notation to arbitrary sets of
vertices of $G$: if $X\subseteq V(G)$, then we set
\begin{equation*}
  X' = \Set{v'}{v\in X}.
\end{equation*}
Let $A$ be the vertex set of one component of $G-E(M)$. Thus,
$A'$ is the vertex set of the other component and both $G[A]$ and
$G[A']$ are chordless cycles.

In this section, we prove Theorem~\ref{t:main}. Fix an edge $e$ of the
matching $M$. Let $a$ and $a'$ be its end-vertices.  We also choose an
orientation for each of the cycles $G[A]$ and $G[A']$. All these will
be fixed throughout this section.

If $X\subseteq A$, then $\match G X$ is the spanning
subgraph of $G$ obtained by adding to $G-E(M)$ all the edges $vv'$, where $v\in
X$. In expressions such as $\match G {\Setx{a,b}}$, we
omit one pair of set brackets, and write just $\match G {a,b}$.

The \emph{auxiliary graph} $H_a$ (with respect to the vertex $a$) is
defined as follows. The vertex set of $H_a$ is $A-\Setx{a}$. Two
vertices $x$ and $y$ of $H_a$ are adjacent in $H_a$ whenever the
cyclic order of $a$, $x$ and $y$ on $G[A]$ is $axy$ and the cyclic
order of their friends on $G[A']$ is $a'y'x'$.

\begin{figure}
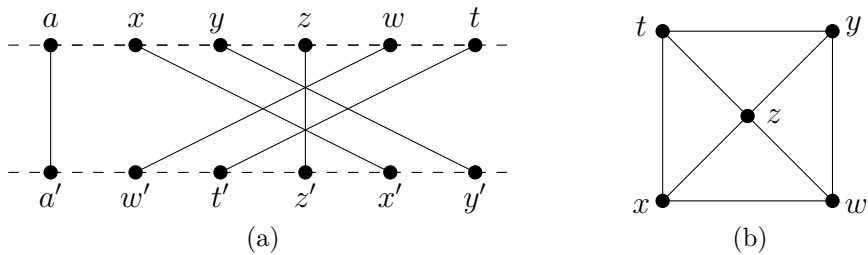

  \centering
  \hf\sfig2{}\hf\sfig3{}\hf
  \caption{(a) The standard drawing of the graph $G$. (b) The
    corresponding graph $H_a$.}
  \label{fig:cross}
\end{figure}

Alternatively, consider the following standard procedure, illustrated
in Figure~\ref{fig:cross}. Arrange the vertices of $A$ on a horizontal
line in the plane, starting on the left with $a$ and continuing along
the cycle $G[A]$ according to the fixed orientation. Place the
vertices of $A'$ on another horizontal line, putting $a'$ leftmost and
continuing in accordance with the orientation of $G[A']$. Join each
vertex $z\in A$ with its friend by a straight line segment. The segment $aa'$
is not crossed by any other segment, and for
$x,y\in A-\Setx a$, the segments $xx'$ and $yy'$ cross each other if
and only if $x$ and $y$ are adjacent in $H_a$.
Thus, $H_a$ can be directly read off the resulting figure,
which is called the \emph{standard drawing} of $G$.

A similar construction, without fixing the vertex $a$,
gives rise to a class of graphs also called `permutation graphs'
(see~\cite{BLS:graph}). In this paper, we only use this term as defined
in Section~\ref{sec:introduction}.

The following lemma provides a link between induced paths in $H_a$ and
$M$-copies of $P_{10}$ in $G$.
\begin{lemma}\label{l:path}
  Suppose that $H_a$ contains an induced path $xyzw$ on 4
  vertices. Then $\match G {a,x,y,z,w}$ is an $M$-$P_{10}$ in $G$.
\end{lemma}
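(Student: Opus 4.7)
The plan is to exploit the standard drawing of $G$ recalled just before the statement, under which edges of $H_a$ correspond bijectively to pairs of segments $vv'$ that cross in the drawing. Given the induced path $xyzw$ in $H_a$, the four segments $xx'$, $yy'$, $zz'$, $ww'$ have exactly three pairwise crossings (the pairs $xy$, $yz$, $zw$) and no others.

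My first step is to determine the orders in which the sets $\{x,y,z,w\}$ and $\{x',y',z',w'\}$ appear on $A$ (starting from $a$) and $A'$ (starting from $a'$). Encoding the two orders as a single permutation $\pi \in S_4$, a pair is a crossing exactly when it is an inversion of $\pi$; so I need to find the permutations whose set of inversions, viewed as a graph on $\{1,2,3,4\}$, is a four-vertex path. A quick enumeration of the six elements of $S_4$ with three inversions shows that only $2413$ and $3142$ have this property, and these are mutually inverse. After matching the endpoints of the path to their positions, the order along $A$ from $a$ must be $x,z,y,w$ and the order along $A'$ from $a'$ must be $y',x',w',z'$ (the other case, obtained by swapping the roles of $A$ and $A'$, is completely symmetric and handled in the same way).

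It then remains to verify that $\match G{a,x,y,z,w}$---which is the union of the two cycles $G[A]$ and $G[A']$ with the matching edges $aa', xx', yy', zz', ww'$---is a subdivision of $P_{10}$. Its ten degree-$3$ vertices are exactly the endpoints of these five matching edges, and the remaining vertices subdivide the ten arcs separating consecutive branch vertices along each cycle. Contracting each such arc to an edge yields a cubic graph on ten vertices whose outer $5$-cycle is $a\,x\,z\,y\,w$, whose inner $5$-cycle is $a'\,y'\,x'\,w'\,z'$, and whose matching is $ii'$ for $i\in\{a,x,y,z,w\}$. The relabelling $(a,x,z,y,w)\mapsto(0,1,2,3,4)$ sends the inner cycle to $0'\,3'\,1'\,4'\,2'$, which is the Petersen pentagram traversed backwards; so the contracted graph is isomorphic to $P_{10}$, and therefore $\match G{a,x,y,z,w}$ is an $M$-copy of $P_{10}$, as required. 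I expect the delicate step to be the first one: the case analysis on inversion patterns is mechanical but must be carried out carefully to ensure that the induced-path condition really does pin down the arrangement up to the obvious $A\leftrightarrow A'$ symmetry; once this is settled, the identification with $P_{10}$ is a straightforward labelling check.
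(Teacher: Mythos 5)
Your proof is correct and follows essentially the same route as the paper's: both translate edges of $H_a$ into crossings in the standard drawing, pin down the (up to symmetry) unique arrangement of $x,y,z,w$ and their friends forced by the induced-path condition, and then verify that the resulting configuration is a subdivision of $P_{10}$. The only difference is organizational --- the paper carries out an ad hoc positional case analysis on where $z$ and $w$ may lie, whereas you enumerate the six permutations in $S_4$ with three inversions and note that only $2413$ and $3142$ have a path as their inversion graph, which is a slightly more systematic packaging of the same finite check (and your enumeration is indeed correct).
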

\begin{proof}
  Let $P$ be the path $xyzw$ in $H_a$. Since $xy\in E(P)$, the edges
  $xx'$ and $yy'$ cross. By symmetry, we may assume that $x\in aCy$
  and $y'\in a'C'x'$.
  First, note that $z\notin yCa$. Otherwise, as $zz'$ crosses
  $yy'$, it would follow that $zz'$ also crosses $xx'$, which contradicts the assumption
  that $x$ and $z$ are not adjacent in $G$.
  We now consider two cases, regarding whether or not $z\in aCx$.

  Case 1: $z\in aCx$. Since the edges $zz'$ and $xx'$ do not cross,
  $z'\in a'C'x'$; moreover, since $zz'$ and $yy'$ cross, it follows
  that $z'\in y'C'x'$.

  We assert that $w\in zCx$. Suppose that this is not the case. If
  $w\in aCz$, then $ww'$ cannot cross $zz'$ without crossing $yy'$,
  contradicting the fact that $zw\in E(P)$ and $yw\notin E(P)$. If
  $w\in xCy$, then $ww'$ crosses $xx'$ or $yy'$ regardless of the
  position of $w'$, which results in a similar contradiction. Finally,
  if $w\in yCa$, then $ww'$ cannot cross $zz'$ without crossing $xx'$.

  Thus, we have shown that $w\in zCx$, which implies that $w'\in
  a'C'y'$, as $ww'$ and $yy'$ do not cross. Summing up, $\match G
  {a,x,y,z,w}$ is precisely as in Figure~\ref{fig:case} and
  constitutes an $M$-copy of the Petersen graph.

  \begin{figure}
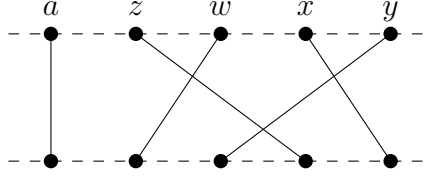

    \centering
    \fig1
    \caption{The graph $\match G {a,x,y,z,w}$ in Case 1 of the proof
      of Lemma~\ref{l:path}.}
    \label{fig:case}
  \end{figure}

  Case 2: $z\notin aCx$. Then, $z\in xCy$. Since $zz'$ and $xx'$ do
  not cross, $z'\in x'C'a'$. As $ww'$ crosses $zz'$ but none of $xx'$
  and $yy'$, the only possibility is that $w\in yCa$ and $w'\in
  x'C'z'$, which again produces an $M$-$P_{10}$.
\end{proof}

By Lemma~\ref{l:path}, if there is no $M$-copy of $P_{10}$ containing
$aa'$ in $G$, then $H_a$ contains no induced path on 4 vertices. Such
graphs are known as \emph{cographs} or \emph{$P_4$-free} graphs. There
are various equivalent ways to describe them, summarized in the
survey~\cite[Theorem~11.3.3]{BLS:graph} by Brandst\"{a}dt, Le and
Spinrad.  We use the characterisation that involves pairs of twin
vertices.  (Two vertices $x$ and $y$ of a graph $H$ are \emph{twins}
if $N_H(x) = N_H(y)$.)

\begin{theorem}\label{t:p4free}
  A graph $G$ is $P_4$-free if and only if every induced subgraph of
  $G$ with at least two vertices contains a pair of twins.
\end{theorem}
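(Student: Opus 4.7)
My plan is to prove the two implications separately. The reverse direction is straightforward by contrapositive: if $G$ contains an induced path $abcd$, then in the induced subgraph $H = G[\Setx{a,b,c,d}]$ the four neighbourhoods are $\Setx{b}$, $\Setx{a,c}$, $\Setx{b,d}$, and $\Setx{c}$, which are pairwise distinct, so $H$ contains no pair of twins and hence witnesses the failure of the right-hand condition.

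For the forward direction I would proceed by induction on $\size{V(G)}$. Since every induced subgraph of a $P_4$-free graph is itself $P_4$-free, it is enough to produce one pair of twins inside $G$ whenever $G$ is $P_4$-free on at least two vertices; applying this to each induced subgraph then yields the full statement. The essential ingredient is the classical dichotomy for cographs: any $P_4$-free graph on at least two vertices is either disconnected or has disconnected complement. I would either quote this from the survey~\cite{BLS:graph} or give the short direct argument --- pick two non-adjacent vertices $u,v$ in the same component of $G$ and exhibit an induced $P_4$ by chasing along a shortest $u$-$v$ path together with a suitable vertex adjacent to $u$ and $v$ in the complement.

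Given the dichotomy, if $G$ is disconnected with components $C_1,\dots,C_k$, then either some $C_i$ has at least two vertices --- in which case induction applied to $G[V(C_i)]$ produces a pair of twins which, having no neighbours outside $C_i$, remain twins in $G$ --- or every component is a singleton and $G$ is edgeless, so any two vertices form a trivial twin pair. The case where $\bar{G}$ is disconnected is handled symmetrically, since both $P_4$-freeness and the twin relation are preserved under complementation. The one nontrivial step is the dichotomy lemma; everything else reduces to a neighbourhood computation or a routine induction.
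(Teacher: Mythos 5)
Your argument is correct, but note that the paper does not actually prove this statement: it is quoted from the survey of Brandst\"{a}dt, Le and Spinrad~\cite[Theorem~11.3.3]{BLS:graph}, so there is no in-paper proof to compare against, and your proposal supplies the standard self-contained argument. The backward direction (the four vertices of an induced $P_4$ have pairwise distinct neighbourhoods, so that subgraph has no twin pair) is complete. The forward direction correctly reduces everything to the classical dichotomy that a $P_4$-free graph on at least two vertices is disconnected or has a disconnected complement (Seinsche's theorem); since that lemma carries the entire content of the theorem, in a written version you should either cite it explicitly or carry out the shortest-path argument in full --- ``chasing along a shortest $u$--$v$ path together with a suitable vertex'' is a plan, not yet a proof. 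One genuine subtlety you gloss over: under the paper's literal definition of twins, $N_H(x)=N_H(y)$ with open neighbourhoods, adjacent vertices can never be twins, and the theorem would then already fail for $K_2$; the intended notion is $N(x)\setminus\Setx{y}=N(y)\setminus\Setx{x}$, allowing both adjacent (true) and non-adjacent (false) twins, and it is only under this reading that your claim that the twin relation is preserved under complementation is valid. With that definition fixed, your induction --- twins found inside a component of $G$ or of $\ol{G}$ remain twins globally, and edgeless or complete graphs are trivial --- is sound.
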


To be able to use Lemma~\ref{l:path} in conjunction with
Theorem~\ref{t:p4free}, we need to interpret twin pairs of $H_a$ in
terms of $G$.

\begin{lemma}\label{l:twin}
  Let $x$ and $y$ be twins in $H_a$. Let $Q'$ be the path in $G$
  defined by
  \begin{equation*}
    Q' =
    \begin{cases}
      x'C'y' & \text{if $xy\notin E(H_a)$,}\\
      y'C'x' & \text{otherwise.}
    \end{cases}
  \end{equation*}
  Then $M$ matches the vertices of the path $xCy$ to those of $Q'$
  and \emph{vice versa}.
\end{lemma}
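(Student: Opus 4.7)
The plan is to translate the twin condition in $H_a$ into a geometric statement about the standard drawing of $G$, and then read off the claimed bijection. Without loss of generality, I may assume that the cyclic order on $G[A]$ is $a,x,y$, that is, $x\in aCy$. Then the case distinction in the definition of $Q'$ matches up with the crossing structure in the standard drawing: if $xy\notin E(H_a)$, the segments $xx'$ and $yy'$ do not cross, forcing $x'\in a'C'y'$ and hence $Q'=x'C'y'$; if $xy\in E(H_a)$, the segments cross, so $y'\in a'C'x'$ and $Q'=y'C'x'$.

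The heart of the proof will be the following equivalence, to be established for every vertex $v\in A\setminus\{a,x,y\}$: the vertex $v$ lies in the interior of $xCy$ if and only if $v'$ lies in the interior of $Q'$. Once this is shown, the lemma is immediate: the endpoints $x$ and $y$ of $xCy$ are matched by $M$ to the endpoints $x'$ and $y'$ of $Q'$, and the equivalence, combined with the fact that $M$ restricts to a bijection between $A$ and $A'$, yields the asserted bijection between $V(xCy)$ and $V(Q')$.

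To prove the equivalence, I fix $v$ and split on its position on $G[A]$, which must be one of $v\in aCx$, $v\in xCy$, or $v\in yCa$ (all strictly). In each case, the crossing characterisation of edges of $H_a$ determines when $vv'$ crosses $xx'$ and when it crosses $yy'$, purely in terms of the position of $v'$ relative to $x'$ and $y'$ on $G[A']$. The twin hypothesis (that $v$ is adjacent to $x$ in $H_a$ if and only if $v$ is adjacent to $y$ in $H_a$) then becomes an equivalence between two cyclic-order statements about $v'$, which, combined with the relative order of $x'$ and $y'$ fixed in the first paragraph, forces $v'$ into the interior of $Q'$ exactly when $v$ is in the interior of $xCy$. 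The main obstacle is the bookkeeping: there are six sub-cases in total (three positions of $v$, two options for whether $xy\in E(H_a)$), but each reduces to a one-line cyclic-order check of the same flavour. Linearising $G[A']$ starting from $a'$ and phrasing everything in terms of the positions of $v'$, $x'$ and $y'$ makes each sub-case a transparent inequality comparison.
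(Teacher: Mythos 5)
Your proposal is correct and follows essentially the same route as the paper: both translate the twin condition into the crossing structure of the standard drawing and do a short case analysis on the position of a vertex $v$ relative to $x,y$ on $C$ and of $v'$ relative to $x',y'$ on $C'$. The only difference is presentational --- you prove the biconditional directly (which also handles the ``vice versa'' explicitly), whereas the paper argues by contradiction from a hypothetical edge $ww'$ with $w\in V(xCy)$ and $w'\notin V(Q')$ and appeals to symmetry for the converse.
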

\begin{proof}
  Suppose, on the contrary, that the statement does not hold. By
  symmetry, we may assume that $x\in aCy$, and that $M$ contains an
  edge $ww'$ with $w \in V(xCy)$ and $w'\notin V(Q')$. We assert that
  $ww'$ crosses exactly one edge from $\Setx{xx',yy'}$. To prove this,
  we consider two cases according to whether or not $xx'$ and $yy'$
  cross. If they do not cross, then $ww'$ crosses only $xx'$ (if
  $w'\in V(a'C'x')$) or only $yy'$ (if $w'\in V(y'C'a')$). Otherwise,
  $ww'$ crosses only $xx'$ (if $w'\in V(a'C'y')$) or only $yy'$ (if
  $w'\in V(x'C'a')$). In each case, we obtain a contradiction with the
  assumption that $x$ and $y$ are twins in $H_a$.
\end{proof}

\begin{figure}
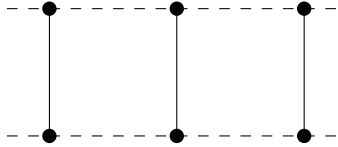

  \centering
  \hf\fig4\hf
  \caption{The triangular prism $G$ with the unique 1-factor $M$ such
    that $(G,M)$ is a marked permutation graph.}
  \label{fig:prism}
\end{figure}

We now prove Theorem~\ref{t:main}, proceeding by induction on the
number of vertices of $G$. The base case is the triangular prism, the
unique permutation graph on $6$ vertices (Figure~\ref{fig:prism}), for
which the theorem is trivially true since $e$ cannot be contained in
every $M$-$C_4$ of $G$. Therefore, we assume that $G$ has at least $8$
vertices and that every $M$-copy of $C_4$ in $G$ contains the edge
$aa'$. 

Suppose first that $azz'a'$ is such an $M$-copy of $C_4$. Let $G_0$ be
the cubic graph obtained by removing the edge $zz'$ and suppressing
the resulting degree 2 vertices $z$ and $z'$. Set
$M_0=M\setminus\Setx{zz'}$. All $M_0$-copies of $C_4$ created by this
operation contain the edge $aa'$.  Therefore, regardless of whether or
not $G_0$ contains an $M_0$-$C_4$, the induction hypothesis implies
that $aa'$ is contained in an $M_0$-copy of $P_{10}$.  This yields an
$M$-copy of $P_{10}$ in $G$ containing $aa'$, as required.

Consequently, it may be assumed that $G$ does not contain any
$M$-$C_4$. Assume that $H_a$ contains no pair of twin
vertices. Theorem~\ref{t:p4free} implies that $H_a$ is not
$P_4$-free. Let $X$ be a subset of $V(H_a)$ of size 4 such that
$H_a[X] \isom P_4$. By Lemma~\ref{l:path}, $\match G {X\cup\Setx{a}}$
is an $M$-copy of $P_{10}$, and the sought conclusion follows.

Thus, we may assume that $H_a$ contains twin vertices $x$ and
$y$. Without loss of generality, $x$ belongs to $aCy$.

Let the path $Q'$ be defined as in Lemma~\ref{l:twin}. Thus, since $x$
and $y$ are twins in $H_a$, vertices of the path $xCy$ are only
adjacent in $M$ to vertices of $Q'$ and \emph{vice versa}. We
transform $G$ into another cubic graph $G_1$ by removing all vertices
that are not contained in $\Setx{a,a'} \cup V(xCy\cup Q')$ and adding
the edges $ax$, $ay$, $a'x'$ and $a'y'$ (if they are not present
yet). Let $M_1$ be the perfect matching of $G_1$ consisting of all the
edges of $M$ contained in $G_1$. Note that although the transformation
may create $M$-copies of $C_4$ not present in $G$, the edge $aa'$ is
contained in every $M_1$-copy of $C_4$ in $G_1$. Furthermore, the path
$yCx$ in $G$ must have some internal vertices other than $a$, since
otherwise $G$ would contain an $M$-$C_4$, namely $yaa'y'$. Thus, $G_1$
has fewer vertices than $G$. The induction hypothesis implies that
$aa'$ is contained in an $M_1$-copy of $P_{10}$ in $G_1$, and
therefore also in $G$.


\section{Counting the Petersen copies}
\label{sec:counting}

Turning to the quantitative side of the question studied in
Section~\ref{sec:proof}, we now derive from Theorem~\ref{t:main} a
lower bound on the number of $M$-copies of $P_{10}$ in a permutation
graph with no $M$-$C_4$. The bound is linear in the order of the
graph. We give a construction showing that this lower bound is tight
up to a constant factor.

Throughout this section, $(G,M)$ is a marked permutation graph with
vertex set $A\cup A'$ just like in Section~\ref{sec:proof}.

We will need two lemmas, the second of which we find to be of interest
in its own right. The first lemma is an observation on auxiliary
graphs which follows readily from the definition; its proof is
omitted.
\begin{lemma}\label{l:redrawing}
  Let $a,b \in A$. Then the following hold for each $x,y\in
  A-\Setx{a,b}$:
  \begin{enumerate}[\quad(i)]
  \item $ax \in H_b$ if and only if $bx \in H_a$,
  \item $xy \in H_b$ if and only if $\size{\Setx{bx, by, xy} \cap H_a}
    \in \Setx{1,3}$.
  \end{enumerate}
\end{lemma}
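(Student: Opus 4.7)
The plan is to reduce both parts to a short sign calculus on the two oriented cycles. For distinct vertices $c, u, v$ on $G[A]$, set $s_A(c, u, v) = +1$ if the cyclic order of these three on $G[A]$ is $cuv$ and $-1$ otherwise, and define $s_{A'}$ analogously on $G[A']$. Unwinding the definition of the auxiliary graph, a pair $uv$ with $u, v \neq c$ is an edge of $H_c$ precisely when
\[
s_A(c, u, v) \cdot s_{A'}(c', u', v') = -1.
\]
I would record this reformulation first, since once it is in place both assertions become short algebraic identities.

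For (i), the only fact needed is that transposing two arguments of $s_A$ reverses its sign, which gives $s_A(b, a, x) = -s_A(a, b, x)$ and, on the other cycle, $s_{A'}(b', a', x') = -s_{A'}(a', b', x')$. The two minus signs cancel, so the product characterising the $H_b$-adjacency of $a$ and $x$ equals the one characterising the $H_a$-adjacency of $b$ and $x$, proving~(i).

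For (ii), the key ingredient is the four-point identity
\[
s_A(a, b, x) \cdot s_A(a, b, y) \cdot s_A(a, x, y) \cdot s_A(b, x, y) = 1,
\]
valid on any oriented cycle for any four distinct points. I would establish it by inspecting the six possible cyclic orders of $\{a, b, x, y\}$; this is the one genuinely non-automatic step. Granting the identity on both cycles, I would solve it for $s_A(b, x, y)$ and for $s_{A'}(b', x', y')$, and then observe that the product $s_A(b, x, y) \cdot s_{A'}(b', x', y')$ regroups as
\[
\bigl[s_A(a, b, x)\, s_{A'}(a', b', x')\bigr] \cdot \bigl[s_A(a, b, y)\, s_{A'}(a', b', y')\bigr] \cdot \bigl[s_A(a, x, y)\, s_{A'}(a', x', y')\bigr].
\]
Each bracket is $-1$ exactly when the corresponding pair $bx$, $by$, or $xy$ is an edge of $H_a$, so the product equals $-1$ --- that is, $xy \in H_b$ --- if and only if an odd number of the three pairs lies in $H_a$, which is the stated condition.

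The main obstacle is precisely this four-point sign identity; a single sketch displaying the six cyclic orders of four points settles it, after which both (i) and (ii) are only a few lines of sign arithmetic.
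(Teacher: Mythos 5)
Your proof is correct. The paper itself omits the proof of this lemma, stating only that it ``follows readily from the definition,'' so there is nothing to compare against beyond the intended direct verification via the standard drawing; your sign calculus --- the reformulation $uv\in H_c \iff s_A(c,u,v)\cdot s_{A'}(c',u',v')=-1$, the antisymmetry of $s_A$ for part~(i), and the four-point identity $s_A(a,b,x)\,s_A(a,b,y)\,s_A(a,x,y)\,s_A(b,x,y)=1$ (which indeed checks out in all six cyclic orders) for part~(ii) --- is a clean and complete way of carrying out exactly that verification.
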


\begin{lemma}\label{l:replace}
  Let $a,b \in A$. One of the following conditions holds:
  \begin{itemize}
  \item there is some $M$-$P_{10}$ in $G$ containing both $aa'$ and
    $bb'$, or
  \item for any $F \subset A$ with $\size F = 4$ and $\Setx{a,b}\cap F
    = \emptyset$, it holds that $\match G {F\cup\Setx a} \isom P_{10}$
    if and only if $\match G {F\cup\Setx b} \isom P_{10}$.
  \end{itemize}
\end{lemma}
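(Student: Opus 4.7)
The plan is to translate the statement into a question about induced $P_4$s in the auxiliary graphs, and then to analyse how $H_a[F]$ compares with $H_b[F]$. The proof of Lemma~\ref{l:path} via the standard drawing in fact yields a stronger, bi-directional statement: for any $4$-subset $S$ of $A-\Setx{a}$, the spanning subgraph $\match{G}{S\cup\Setx{a}}$ is an $M$-$P_{10}$ if and only if $H_a[S]\isom P_4$. (Indeed, the crossing graph of the five chords of an $M$-$P_{10}$ through $aa'$ must be $C_5$, and since $aa'$ is never crossed in the standard drawing, the crossing graph restricted to $S$ must be $C_5$ minus a vertex, i.e.\ $P_4$.) Consequently, the failure of the first alternative---no $M$-$P_{10}$ contains both $aa'$ and $bb'$---translates into the statement that $H_a$ contains no induced $P_4$ passing through $b$, and symmetrically that $H_b$ contains no induced $P_4$ passing through $a$.

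Fix $F\subseteq A-\Setx{a,b}$ with $\size{F}=4$ and set $B:=N_{H_a}(b)\cap F$. Lemma~\ref{l:redrawing}(ii) implies that $H_b[F]$ is obtained from $H_a[F]$ by a \emph{switching} operation: edges lying entirely within $B$ or entirely within $F\setminus B$ are preserved, whereas edges across the cut $(B,F\setminus B)$ are complemented. By the symmetry of the hypothesis in $a$ and $b$, it suffices to prove that $H_a[F]\isom P_4$ implies $H_b[F]\isom P_4$.

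Assume $H_a[F]$ is the induced path $x_1x_2x_3x_4$. A finite case analysis on $B\subseteq F$, exploiting the hypothesis that no induced $P_4$ of $H_a[F\cup\Setx{b}]$ contains $b$, shows that the only subsets compatible with this restriction are $B\in\Setx{\emptyset,\Setx{x_2,x_3},F}$; each of the thirteen remaining subsets creates an explicit obstruction. For instance, $B=\Setx{x_1}$ yields the induced $P_4$ $bx_1x_2x_3$, while $B=\Setx{x_1,x_3}$ yields $x_4x_3bx_1$. In each of the three surviving cases, the switching is either trivial (when $B\in\Setx{\emptyset,F}$, so that no cut-edges exist) or it transforms $x_1x_2x_3x_4$ into the path $x_1x_3x_2x_4$ (when $B=\Setx{x_2,x_3}$); either way, $H_b[F]$ is again isomorphic to $P_4$. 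The main obstacle will be the exhaustive case analysis: though every individual case is elementary, handling all sixteen subsets requires either a careful enumeration or a systematic use of the reflection $x_i\leftrightarrow x_{5-i}$ and the self-complementarity of $P_4$ (which swaps $B$ and $F\setminus B$) to cut the bookkeeping down to a handful of representatives.
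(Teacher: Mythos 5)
Your proof is correct and follows essentially the same route as the paper's: both reduce the claim via Lemma~\ref{l:path} to showing that $F$ induces a $P_4$ in $H_a$ if and only if it does in $H_b$, both use the absence of an induced $P_4$ through $b$ in $H_a$ to pin down $N_{H_a}(b)\cap F\in\Setx{\emptyset,\Setx{x_2,x_3},F}$, and both finish by applying Lemma~\ref{l:redrawing}(ii) (read as a switching across the cut) to see that the $P_4$ survives, possibly with its middle pair transposed. The only differences are cosmetic: you make explicit the converse of Lemma~\ref{l:path} (which the paper also needs but uses tacitly, and which is most cleanly justified by noting that $H_a[S]\isom P_4$ for the unique girth-$5$ chord pattern, rather than via a ``$C_5$ crossing graph'' containing the uncrossed chord $aa'$), and you replace the paper's two derived adjacency conditions by a direct enumeration of the sixteen subsets.
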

\begin{proof}
  Assume that there exists no $M$-$P_{10}$ containing both $aa'$ and
  $bb'$.  By Lemma~\ref{l:path}, it is sufficient to show that a set
  $\Setx{u,w,x,y} \subseteq A \setminus \Setx{a,b}$ induces a path of
  length $4$ in $H_a$ if and only if it induces a path of length $4$
  in $H_b$.

  Let $U_1 = N_{H_a}(b) = N_{H_b}(a)$ and $U_2 = A \setminus
  (U_1\cup\Setx{a,b})$. Lemma~\ref{l:path} implies that in the auxiliary graph
  $H_a$, there is no induced path of length $4$ containing the vertex
  $b$. Therefore,
  \begin{enumerate}[\quad(i)]
  \item if $x,y \in U_1$, $z \in U_2$, and $xy \notin H_a$, then $xz
    \in H_a$ if and only if $yz \in H_a$, and
  \item if $x,y \in U_2$, $z \in U_1$, and $xy \in H_a$,
    then $xz \in H_a$ if and only if $yz \in H_a$.
  \end{enumerate}
  Hence, if $uwxy$ is an induced path of length $4$ in $H_a$, then
  $\Setx{u,w,x,y} \cap U_1 \in \Setx{\Setx{u,w,x,y},\Setx{w,x},
    \emptyset}$.  By Lemma~\ref{l:redrawing}, it follows that
  $\Setx{u,w,x,y}$ induces a path of length $4$ in $H_b$ as well.
  More precisely, this path is $uwxy$ if $\Setx{u,w,x,y} \cap U_1 \in
  \Setx{\Setx{u,w,x,y},\emptyset}$, and $uxwy$ if $\Setx{u,w,x,y} \cap
  U_1 = \Setx{w,x}$.  The conclusion follows by symmetry of the roles
  played by $a$ and $b$.
\end{proof}

We can now prove the aforementioned lower bound.
\begin{proposition}\label{p:lower}
  If $(G,M)$ is a marked permutation graph with $n \geq 40$ vertices
  and no $M$-$C_4$, then $(G,M)$ contains at least $n/2-4$
  $M$-copies of the Petersen graph.
\end{proposition}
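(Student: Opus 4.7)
The plan is to use Lemma~\ref{l:replace} to organise the $M$-$P_{10}$'s into swap-orbits and then lower-bound their total count through pair coverage. Define $\sim$ on $A$ by $a\sim b$ iff $a=b$ or no $M$-$P_{10}$ contains both $aa'$ and $bb'$; Lemma~\ref{l:replace} makes $\sim$ transitive, hence an equivalence relation. Let $C_1,\dots,C_r$ be its classes, write $k_i=|C_i|$ so that $\sum_i k_i=n/2$, and set $k^\star=\max_i k_i$. Two $A$-vertices of the same $M$-$P_{10}$ cannot be $\sim$-equivalent, so every $M$-$P_{10}$ distributes its five $A$-vertices among five distinct classes. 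Iterating Lemma~\ref{l:replace} once per representative shows that if some $M$-$P_{10}$ uses the class $5$-subset $\mathcal{S}=\{C_{i_1},\dots,C_{i_5}\}$, then \emph{every} choice of one vertex per class in $\mathcal{S}$ yields another $M$-$P_{10}$. Letting $\mathcal{C}$ be the family of such admissible class $5$-subsets, this gives $t=\sum_{\mathcal{S}\in\mathcal{C}}\prod_{C\in\mathcal{S}}|C|$. Moreover, Theorem~\ref{t:main} ensures that each class lies in some $\mathcal{S}\in\mathcal{C}$, and the definition of $\sim$ ensures that each pair of distinct classes also does.

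Writing $d_C$ for the number of $M$-$P_{10}$'s through a fixed vertex of $C$, we have $5t=\sum_C k_C d_C$, and I would combine two lower bounds on $d_C$. The first is $d_C\geq (n/2-k_C)/4$: for each $D\neq C$, the admissible $5$-subset containing $\{C,D\}$ supplies at least $k_D$ copies through any fixed $a\in C$; summing over $D$ and noting that every copy through $a$ meets exactly four classes other than $C$ yields the inequality. The second is $d_C\geq k^\star$ whenever $C\neq C^\star$, where $C^\star$ is a class of maximum size: specialising the previous argument to $D=C^\star$ already delivers $k^\star$ copies through any fixed $a\in C$.

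The plan then splits on $k^\star$. If $k^\star\leq 3$, apply the first bound to every class:
\begin{equation*}
  5t\geq \frac{(n/2)^2-\sum_C k_C^2}{4}\geq \frac{(n/2)^2-3(n/2)}{4}=\frac{n(n-6)}{16},
\end{equation*}
using $\sum_C k_C^2\leq k^\star\sum_C k_C\leq 3n/2$. The condition $n\geq 40$ suffices to give $n(n-6)/16\geq 5(n/2-4)$, yielding $t\geq n/2-4$. If $k^\star\geq 4$, apply the first bound to $C^\star$ and the second to every other class:
\begin{equation*}
  5t\geq k^\star\cdot\frac{n/2-k^\star}{4}+k^\star(n/2-k^\star)=\tfrac{5}{4}\,k^\star(n/2-k^\star),
\end{equation*}
so $t\geq k^\star(n/2-k^\star)/4$. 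Since every $M$-$P_{10}$ uses five distinct classes, $k^\star\leq n/2-4$; on the integer interval $[4,n/2-4]$ the function $x\mapsto x(n/2-x)$ attains its minimum value $4(n/2-4)$ at both endpoints, and dividing by $4$ gives exactly $n/2-4$.

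The main obstacle is setting up the equivalence-class framework carefully: one must verify the disjointness hypothesis of Lemma~\ref{l:replace} at each step of the iterated swap and extract the pair-coverage property from the definition of $\sim$. Once this is in place, the two lower bounds on $d_C$ fit together cleanly, and the case split on $k^\star$ matches precisely at $k^\star=4$, where both cases meet the target value $n/2-4$; the hypothesis $n\geq 40$ is used only in the case $k^\star\leq 3$, to ensure $n(n-6)/16\geq 5(n/2-4)$.
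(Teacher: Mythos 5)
Your argument is correct, but it is organised quite differently from the paper's. The paper's proof is local: either every edge of $M$ lies in at least $5$ copies (giving $n/2$ at once), or some edge $e$ lies in exactly $x\le 4$ copies, and then Lemma~\ref{l:replace} is applied only to $e$, swapping it for each of the at least $n/2-4x-1$ edges that share no copy with $e$; this produces $x(n/2-4x)$ copies, and minimising over $x\in\{1,2,3,4\}$ (which is where $n\ge 40$ enters, at $x=4$) gives $n/2-4$. You instead globalise Lemma~\ref{l:replace} into an equivalence relation $\sim$ (the transitivity check you allude to is exactly one more application of the lemma and does go through), describe \emph{all} copies as transversals of admissible class $5$-subsets, and run a double count $5t=\sum_C k_C d_C$ with two bounds on $d_C$ and a case split on the maximum class size $k^\star$; here $n\ge 40$ is needed only when $k^\star\le 3$. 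I checked the individual steps --- transitivity, the disjointness hypothesis in the iterated swaps, the pair-coverage of classes, both bounds on $d_C$, and the endpoint minimisation of $x(n/2-x)$ on $[4,n/2-4]$ (which requires the existence of at least one copy, supplied by Theorem~\ref{t:main}) --- and they are all sound. What each approach buys: the paper's is shorter and needs no equivalence-class machinery; yours yields a genuine structure theorem (the copies through any fixed admissible class set form a full product of transversals), which is more information than is needed for the stated bound but could be reused, e.g., to analyse when the bound is close to tight. Both arguments land on exactly $n/2-4$ and both are tight at their respective extremal configurations ($x=1$ or $x=4$ for the paper, $k^\star=4$ or $k^\star=n/2-4$ for you).
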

\begin{proof}
  If each edge of $M$ is contained in at least 5 $M$-copies of
  $P_{10}$, the total number of copies is at least $(5n/2)/5 =
  n/2$. Hence, we may assume that there exists $x\in\{1,2,3,4\}$
  and an edge $e \in E(M)$ that is contained in only $x$ $M$-copies of $P_{10}$.
  Let $\mathcal C$ be the set of these copies.

  At least $n/2-4x-1$ edges of $M$ are not contained in any
  $M$-$P_{10}$ containing $e$. By Lemma~\ref{l:replace}, if we replace
  $e$ by any such edge in any $M$-$P_{10}$ from $\mathcal C$, we
  obtain an $M$-$P_{10}$ again. These replacements yield $x(n/2-4x-1)$
  distinct $M$-copies of $P_{10}$. Thus, in total, $(G,M)$ contains at
  least $x(n/2-4x)$ distinct $M$-copies of $P_{10}$.
  Minimizing this expression over $x\in\{1,2,3,4\}$ and
  using the assumption that $n\geq 40$, we deduce that the
  number of copies is at least $n/2-4$, as asserted.
\end{proof}

We now construct a family of marked permutation graphs $(G_k,M_k)$
showing that the linear estimate in Proposition~\ref{p:lower} is tight
up to a constant factor. The graph $G_k$ has $6k+14$ vertices,
contains no $M_k$-$C_4$, and the number of $M_k$-copies of the
Petersen graph in $G_k$ is only $6k+6$. (We note that graphs with a
somewhat similar structure are constructed
in~\cite[Section~3]{GZ:permutation}.)

\begin{figure}
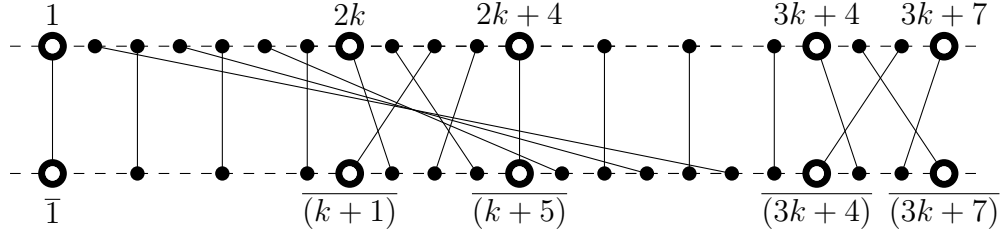

  \centering
  \hf\fig5\hf
  \caption{The marked permutation graph $(G_4,M_4)$. Labels are given
    only for the circled vertices.}
  \label{fig:linear}
\end{figure}

For $k=4$, the graph $(G_k,M_k)$ is shown in
Figure~\ref{fig:linear}. We now give a formal definition and
determine the number of $M_k$-copies of $P_{10}$.

Let $A = \Setx{1,2,\dots,3k+7}$ and $A' = \Setx{\ol 1,\ol
  2,\dots,\ol{3k+7}}$. The vertex set of $G_k$ is $A\cup A'$. On each
of $A$ and $A'$, we consider the standard linear order (in particular,
$\ol 1 < \ol 2 < \dots < \ol{3k+7}$). As in Section~\ref{sec:proof},
we write $i'$ for the neighbour in $M_k$ of a vertex $i\in A$. Thus,
$i' = \overline j$ for a suitable $j$.

Let
\begin{align*}
  E_1 &= \Set{(2i-1)\ol i}{1\leq i \leq k} \cup
  \Set{(2k+i+3)\ol{(k+2i+3)}}{1\leq i \leq k},\\
  E_2 &= \Set{(2i)\ol{(3k+4-2i)}}{1\leq i \leq k-1},\\
  E_3 &=
  \{(2k)\ol{(k+2)},(2k+1)\ol{(k+4)},(2k+2)\ol{(k+1)},(2k+3)\ol{(k+3)},\\
  &\qquad
  (3k+4)\ol{(3k+5)},(3k+5)\ol{(3k+7)},(3k+6)\ol{(3k+4)},(3k+7)\ol{(3k+6)}\}.
\end{align*}
Edges in $E_1$, $E_2$ and $E_3$ will be called \emph{vertical},
\emph{skew} and \emph{special}, respectively. Moreover, the first four
and the last four edges in $E_3$ are two \emph{groups of special edges}.

\begin{proposition}\label{prop:last}
  The marked permutation graph $(G_k,M_k)$ contains exactly $6k+6$
  $M$-copies of the Petersen graph.
\end{proposition}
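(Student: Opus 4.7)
The plan is to convert the enumeration of $M_k$-copies of $P_{10}$ into a count of induced paths on four vertices in the auxiliary graphs $H_a$. By Lemma~\ref{l:path}, every induced $P_4$ on vertices $\Setx{x,y,z,w}$ in $H_a$ yields an $M_k$-$P_{10}$ whose $A$-vertex set is $\Setx{a,x,y,z,w}$. Conversely, any $M_k$-$P_{10}$ with $A$-vertex set $\Setx{v_1,\dots,v_5}$ produces, for each choice of $a=v_i$, an induced $P_4$ in $H_{v_i}$ on the four remaining vertices; this may be verified directly via Lemma~\ref{l:redrawing}(ii), which controls how adjacencies in $H_a$ transform when the root is changed to another vertex of the prospective copy. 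Consequently, the number of $M_k$-copies of $P_{10}$ in $G_k$ equals
\[
\frac{1}{5}\sum_{a\in A} p_4(H_a),
\]
where $p_4(H_a)$ denotes the number of induced $P_4$s in $H_a$.

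The next step is to determine $H_a$ for each $a\in A$ using the standard drawing of $(G_k,M_k)$ shown in Figure~\ref{fig:linear}. Direct inspection of the arithmetic definitions of $E_1$, $E_2$ and $E_3$ reveals that vertical edges are pairwise non-crossing; skew edges pairwise cross and each crosses a prescribed range of vertical edges; and each of the two special groups contributes a specific small subgraph (in fact, an induced $P_4$) among its own chords, together with a bounded number of crossings with nearby vertical and skew edges. Because of the repetitive backbone structure of the construction, the graphs $H_a$ fall into a handful of structurally distinct forms as $a$ moves along $A$, and Lemma~\ref{l:redrawing} allows one to transfer the $P_4$-count from one root to another within the same ``region'' of the drawing.

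With this building-block analysis in hand, one enumerates the induced $P_4$s in $H_a$ by classifying each potential $M_k$-$P_{10}$ according to the type profile (vertical/skew/special) of its five matching edges. Contributions using only vertical and skew edges produce a linear-in-$k$ count coming from the skew edges sweeping across the left half of the drawing; contributions using a mix with one of the two special groups produce bounded contributions concentrated near those groups; and purely ``special'' contributions lie entirely within one of the groups. Summing all contributions, and then dividing by $5$ to correct the five-fold multiplicity, should produce the asserted total of $6k+6$.

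The main obstacle is executing this case analysis cleanly. One must verify completeness (no induced $P_4$ is overlooked) and correctness (no spurious $P_4$ is counted because of an unexpected additional crossing), particularly at the two interfaces where the regular backbone of vertical and skew edges meets the irregular structure of the special groups. I would mitigate the risk of error by first computing the answer for small $k$ (starting from the $k=4$ case in Figure~\ref{fig:linear}) to fix the constant term, and then applying Lemma~\ref{l:replace} to consolidate families of roots that play equivalent roles with respect to the remaining matching edges, thereby reducing the number of distinct cases to check.
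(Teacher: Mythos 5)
Your proposal is a plan rather than a proof: the decisive step --- actually determining the graphs $H_a$ and counting their induced $P_4$s --- is never carried out, and the conclusion is only asserted to ``should produce'' $6k+6$. All of the content of the proposition lives in exactly that case analysis, so as written there is a genuine gap. There is also a problem with the counting identity itself. The formula $\frac15\sum_{a\in A}p_4(H_a)$ requires the \emph{converse} of Lemma~\ref{l:path}: that every $M_k$-$P_{10}$ containing $aa'$ arises from a $4$-set inducing a $P_4$ in $H_a$. Lemma~\ref{l:path} gives only one direction, and Lemma~\ref{l:redrawing}(ii), which describes how adjacencies transform under a change of root, does not supply the converse; you would need a separate argument (e.g., that a configuration of five chords whose crossing pattern relative to one of them is not a $P_4$ necessarily contains a $4$-cycle). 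Finally, your suggested use of Lemma~\ref{l:replace} to consolidate roots is delicate: invoking its second alternative for a pair $a,b$ requires knowing that no $M_k$-$P_{10}$ contains both $aa'$ and $bb'$, which for this particular graph is essentially part of what you are trying to prove.

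The paper takes a more direct route that avoids per-root bookkeeping entirely. It first checks that each of the two groups of special edges forms an $M_k$-$P_{10}$ with each of the other $3k+3$ matching edges, giving the $6k+6$ copies. For the upper bound it considers an arbitrary $5$-set $X\subseteq M_k$ containing no whole special group and shows $\match{G_k}{X}$ has a $4$-cycle (so it cannot be a Petersen subdivision): $X$ contains at most one special edge per group (two or three from the same group already create a $4$-cycle), and at most one skew edge (two skew edges, chosen extremal, are forced into a $4$-cycle once one rules out vertical edges between them). Hence $X$ has at least four pairwise non-crossing chords, which produce four $4$-cycles, of which the fifth edge of $X$ can destroy at most two. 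If you want to salvage your auxiliary-graph approach, you would have to prove the converse of Lemma~\ref{l:path} and then perform, for each position of the root relative to the skew and special edges, the explicit $P_4$ enumeration you currently only sketch; the paper's argument shows this can be short-circuited.
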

\begin{proof}
  Each of the groups of special edges forms an $M_k$-$P_{10}$ with
  each of the remaining $3k+3$ edges of $M_k$. We prove that besides
  these $6k+6$ copies, there are no other $M_k$-copies of $P_{10}$ in
  $G_k$.

  For $X\subseteq M_k$, we let $G_X$ be the graph obtained
  from $G_k-M_k$ by adding the edges in $X$ and suppressing the degree
  $2$ vertices.

  Let $X$ be a subset of $M_k$ that contains no group of special
  edges. Suppose that $\match{G_k} X$ is isomorphic to the Petersen
  graph. To obtain a contradiction, we show that $\match{G_k} X$ contains a
  $4$-cycle.

  First of all, if $X$ contains a special edge, then it contains no
  other special edge from the same group. Indeed, a quick case
  analysis shows that if $Y$ consists of any two or three special
  edges in the same group, then $G_Y$ contains a $Y$-$C_4$.

  Thus, for the purposes of our argument, special edges behave just
  like vertical ones. We assert next that $X$ contains at most one skew
  edge. Let $j_1j'_1$ and $j_2j'_2$ be skew edges with
  $j_1 < j_2$ and $j_1+j_2$ maximum among the skew edges in $X$.

  Observe that $X$ contains no vertical edge $ii'$ with $i > j_2$ and
  $i' < j'_2$. Indeed, if there is only one such edge, then it forms
  an $X$-$C_4$ in $G_X$ together with $j_2j'_2$, while if there are at
  least two such edges, then an $X$-$C_4$ is obtained from a
  consecutive pair among them.

  By a similar argument, $X$ contains neither any vertical edge $ii'$
  with $j_1 < i < j_2$, nor any vertical edge $ii'$ with $j'_2 < i' <
  j'_1$. It follows that $j_1j'_1$ and $j_2j'_2$ are contained in an
  $X$-$C_4$ in $G_X$, a contradiction which proves that there is at
  most one skew edge in $X$.

  Consequently, $X$ contains a set $Y$ of at least four edges that are
  vertical or special, as $\size X = 5$. Further, $\size Y \neq 5$, so
  there are exactly four $Y$-copies of $C_4$ in $G_Y$. Only at most
  two of these will be affected by the addition of the fifth edge of
  $X$. Thus, an $X$-$C_4$ persists in $G_X$, a contradiction. The
  proof is complete.
\end{proof}

While the graphs constructed in the proof of
Proposition~\ref{prop:last} are $C_4$-free, they are not cyclically
$5$-edge connected. A slight modification of the construction ensures
this stronger property, but makes the discussion somewhat more
complicated. For this reason, we only described the simpler version.

\end{document}